\newcommand{\hei}{\CJKfamily{hei}}      
\newtheorem{theorem}{Theorem}[section]
\newtheorem{lemma}[theorem]{Lemma}
\newtheorem{proposition}[theorem]{Proposition}
\newtheorem{remark}[theorem]{Remark}
\begin{document}

\title{\huge \hei A remark on the simple cuspidal representations of $GL(n, F)$}
\date{}
\author{\hei Peng Xu}
\maketitle

\begin{abstract}
Let $F$ be a non-archimedean local field of residue characteristic $p$, and let $G$ be the group $GL(n, F)$. In this note, under the assumption $(n, p)=1$, we show that a simple cuspidal representation $\pi$ (that is with normalized level $\frac{1}{n}$) of $G$ is determined uniquely up to isomorphism by the local constants of $\chi\circ \text{det}\otimes \pi$, for all characters $\chi$ of $F^\times$.

\smallskip
\noindent \textbf{Keywords.} simple cuspidal representations; local constants
\end{abstract}

\begin{section}{Introduction}

Let $F$ be a non-archimedean local field with integer ring $\mathfrak{o}_F$ and maximal ideal $\mathfrak{p}_F$ and assume its residue field $k_F= \mathfrak{o}_F/\mathfrak{p}_F$ is of order $q$ and of characteristic $p$. Fix a prime element $\varpi$ and a root of unity $\eta$ of order $q-1$ in $F$. Let $G$ be the general linear group $GL(n, F)$. In this short note, we investigate some aspect of simple cuspidal representations of $G$, especially the behaviour of their local constants under twists by characters of $F^{\times}$. The main result is the following Theorem \ref{main result}, which in particular verifies a very special case of Jacquet's conjecture on the local converse theorem of $G$ (\cite{CPS1994}).

We fix a level one additive character $\psi$ (i.e., $\psi$ is trivial on $\mathfrak{p}_F$ but non-trivial on $\mathfrak{o}_F$) of $F$.

In this note, for a cuspidal representation $\pi$ of $G$ to be simple, we mean it has normalized level $l(\pi)=\frac{1}{n}$. Denote $\chi\circ\textnormal{det}\otimes \pi$ by $\chi\pi$ as usual.

\begin{theorem}\label{main result}
Assume $(n, p)=1$. Let $\pi_1$ and $\pi_2$ be two cuspidal representations of $G$, such that
\begin{equation}\label{basic assumption}
\varepsilon(\chi\pi_1, s, \psi)=\varepsilon(\chi\pi_2, s, \psi),
\end{equation}
for all characters $\chi$ of $F^\times$ and $s\in \mathbb{C}$, which forces $\pi_1$ and $\pi_2$ to have the same normalized level $l$. If further $l=\frac{1}{n}$, then
\begin{center}
$\pi_1 \cong \pi_2.$
\end{center}
\end{theorem}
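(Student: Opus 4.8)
The plan is to combine the explicit (``epipelagic'') construction of simple cuspidal representations with an explicit evaluation of their twisted local constants, and then to recover the construction data by elementary Fourier analysis on the residue field; the substantive input is the $\varepsilon$-factor computation. First one may assume $n\ge 2$. Since the $\pi_i$ are cuspidal and $n\ge 2$, the Rankin--Selberg $L$-factors $L(\chi\pi_i,s)$ are identically $1$, so \eqref{basic assumption} is equivalent to equality of all $\gamma$-factors, $\gamma(\chi\pi_1,s,\psi)=\gamma(\chi\pi_2,s,\psi)$. Taking $\chi$ unramified and varying $s$ already recovers the common conductor exponent $a(\pi_1)=a(\pi_2)$, hence the common normalized level $l$ (via $a(\pi)=n(1+l(\pi))$; this is the ``which forces'' clause) together with the common root number $\varepsilon(\pi_1,\tfrac12,\psi)=\varepsilon(\pi_2,\tfrac12,\psi)$. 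Assuming $l=\tfrac1n$, the hypothesis $(n,p)=1$ puts us in the tame situation: up to conjugacy $\pi_i=\textnormal{c-Ind}_J^G\Lambda_i$ with $J=F^\times\langle g\rangle I_1$, where $I_1$ is the pro-$p$ Iwahori subgroup, $g$ is an affine-Coxeter element normalizing $I_1$ with $g^{\,n}=\varpi$, and $\Lambda_i$ is a character of $J$ whose restriction to $I_1$ is the affine generic character built from $\psi$ and a parameter $t_i\in k_F^\times$, with $\Lambda_i(g)=\zeta_i\in\mathbb C^\times$ and $\Lambda_i|_{F^\times}=\omega_i$ tamely ramified, $\omega_i(\varpi)=\zeta_i^{\,n}$; equivalently, on the Galois side, $\pi_i\leftrightarrow\textnormal{Ind}_{W_{E_i}}^{W_F}\theta_i$ with $E_i/F$ totally tamely ramified of degree $n$ and $a(\theta_i)=2$. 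Thus $\pi_i$ is determined by the triple $(t_i,\zeta_i,\omega_i|_{\mu_{q-1}})$, and it suffices to read this triple off from the twisted local constants.

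The core step is an explicit formula for $\gamma(\chi\pi_i,s,\psi)$ when $a(\chi)$ is small; in fact a bound of size $a(\chi)\le 2$ will suffice. I would compute it either from the Jacquet--Piatetski-Shapiro--Shalika local functional equation for $GL(n)\times GL(1)$ applied to an explicit Whittaker vector of the simple cuspidal $\pi_i$ — this vector's Whittaker function, restricted to $\textnormal{diag}(a,1,\dots,1)$, is supported on a bounded set of valuations of $a$, so the zeta integral becomes a short exponential sum over $k_F$ — or, equivalently, from the known explicit local Langlands correspondence, via
\[
\varepsilon(\textnormal{Ind}_{W_{E_i}}^{W_F}(\theta_i\cdot(\chi\circ\textnormal{Nm}_{E_i/F})),s,\psi)=\lambda(E_i/F,\psi)^{\,n}\,\varepsilon(\theta_i\cdot(\chi\circ\textnormal{Nm}_{E_i/F}),s,\psi\circ\textnormal{Tr}_{E_i/F})
\]
together with the standard evaluation of $\varepsilon$-factors of characters of small conductor. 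The upshot is a closed formula in which: for $a(\chi)\le 1$ (so $a(\chi\pi_i)=n+1$), $\gamma(\chi\pi_i,s,\psi)$ is a monomial $q^{-(s-\frac12)(n+1)}\chi(\varpi)^{\,n+1}c_i\,\nu(b_i)$ with $\nu=\chi|_{\mu_{q-1}}$, $c_i=\varepsilon(\pi_i,\tfrac12,\psi)$ and $b_i\in k_F^\times$; while for $a(\chi)=2$, $\gamma(\chi\pi_i,s,\psi)$ is an elementary factor (a monomial in $\chi(\varpi)$, in $\nu$, and in a constant $d_i\in\mathbb C^\times$) times a term depending on the level-$2$ part $\kappa\in\widehat{k_F}$ of $\chi$ and on the ``leading coefficient'' $\beta_i\in k_F^\times$ of $\theta_i$ — a term which, according to the parity of $n$, is either an additive character $\kappa\mapsto\psi(\beta_i\,\xi(\kappa))$ or a nondegenerate Gauss sum in $\kappa$ twisted by $\beta_i$. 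Here $\beta_i$ determines and is determined by $t_i$. The essential assertion is that the assignment $(t_i,\zeta_i,\omega_i|_{\mu_{q-1}})\mapsto(c_i,b_i,d_i,\beta_i,\dots)$ is injective.

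Granting these formulas I would conclude as follows. Holding $\nu$ and $\chi(\varpi)$ fixed and varying the level-$2$ part $\kappa$ of a conductor-$2$ character $\chi$, \eqref{basic assumption} forces the $\kappa$-dependent terms attached to $\pi_1$ and $\pi_2$ to agree up to a constant; since these terms are (according to the parity of $n$) additive characters in $\beta_i$ or Gauss sums twisted by $\beta_i$, the elementary fact that an additive character, resp.\ a family of Gauss sums, is determined by its values — orthogonality over $k_F$, resp.\ over $k_F^\times$ — yields $\beta_1=\beta_2$, whence $t_1=t_2$. Comparing the resulting monomials in $\nu$ then gives the constants $b_1=b_2$, $d_1=d_2$, and, inspecting their dependence on the tame part of the central character (visible through the restriction of $\theta_i$ to the units), $\omega_1|_{\mu_{q-1}}=\omega_2|_{\mu_{q-1}}$. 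Finally, with $t_i$ and $\omega_i|_{\mu_{q-1}}$ matched, the remaining constants $c_i=\varepsilon(\pi_i,\tfrac12,\psi)$ and $d_i$ are explicit in $\zeta_i$ itself, not merely in $\zeta_i^{\,n}$ — here the coprimality $\gcd(n,n+1)=1$ removes the a priori $n$-fold ambiguity left by $\zeta_i^{\,n}=\omega_i(\varpi)$ — so $\zeta_1=\zeta_2$. (Alternatively, $\omega_1(\varpi)=\omega_2(\varpi)$ can be obtained first from the behaviour of $\gamma$-factors under highly ramified twists, i.e.\ stability, which shortens this last step.) Hence the triples agree and $\pi_1\cong\pi_2$.

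The main obstacle is the explicit, error-free computation of $\gamma(\chi\pi_i,s,\psi)$ for $a(\chi)\le 2$: in the automorphic route, determining the support and the exact values of the Whittaker function of a simple cuspidal representation and evaluating the resulting $GL(n)\times GL(1)$ zeta integral; in the Galois route, controlling the inductivity (Langlands) constant $\lambda(E_i/F,\psi)^{\,n}$ and the — partly degenerate, with a parity-of-$n$ dichotomy — evaluation of the $\varepsilon$-factors of the characters $\theta_i\cdot(\chi\circ\textnormal{Nm}_{E_i/F})$ of $E_i^\times$, so that the dependence of $c_i,b_i,d_i,\beta_i,\dots$ on $(t_i,\zeta_i,\omega_i|_{\mu_{q-1}})$ is made completely explicit. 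Once that closed formula is in hand, the reductions of the first paragraph and the Fourier-theoretic extraction of the parameters are routine.
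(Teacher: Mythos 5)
Your plan is sound in outline and shares its essential mechanism with the paper: evaluate $\varepsilon(\chi\pi_i,\tfrac12,\psi)$ explicitly for small twists via the tame parametrization of simple cuspidals, then recover the parameters by elementary Fourier analysis on $k_F^\times$. The Galois-side route you sketch — inductivity in degree $n$ together with $\varepsilon$-factors of characters of $E_i^\times$ — is essentially the route the paper takes, except that the paper works throughout with the Bushnell--Henniart admissible-pair parametrization $(E_i/F,\theta_i)$ rather than with compact-induction data $(t_i,\zeta_i,\omega_i|_{\mu_{q-1}})$; the needed closed formula is precisely the paper's Proposition~\ref{local cuspidal of a simple pi}, $\varepsilon(\pi_{E,\theta},\tfrac12,\psi)=\theta(\alpha)^{-1}\psi_{E/F}(\alpha)$, quoted from Bushnell--Henniart's work rather than re-derived from Whittaker models. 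Stability under highly ramified twists (your alternative at the end) is also exactly what the paper uses, to pin down the full central character before the tame analysis.

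The genuine difference — and the place where your plan overcomplicates matters — is your reliance on conductor-two twists ($a(\chi)=2$). After disposing of the central character by stability, the paper extracts everything else from \emph{level-zero} twists alone: a level-zero $\chi$ leaves $\alpha_i$ unchanged, $\chi_E\theta_i$ stays of odd level $1$, and Proposition~\ref{local cuspidal of a simple pi} applies verbatim; the $\chi$-dependence $\chi\bigl(N_{E_i/F}(\alpha_i)\bigr)^{-1}$ already sees the tame exponent $a_i$ through $\chi(\eta)^{\mp n a_i}$, and the leftover ambiguity ($a_i$ determined only modulo $(q-1)/\gcd(n,q-1)$) is precisely absorbed by the $F$-automorphism $\sigma$ of $E$, yielding $\theta_1=\theta_2\circ\sigma$ and hence isomorphic admissible pairs. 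By contrast, once you twist by a $\chi$ with $l(\chi)=1$, the character $\chi_E\theta_i$ acquires level $n\cdot l(\chi)=n$: for $n$ even this is no longer odd, the simple one-term formula fails, and you are forced into the degenerate Gauss-sum computation and parity dichotomy you yourself flag as the main obstacle. The paper sidesteps this entirely. Your claim that the level-zero monomial alone determines ``$b_1=b_2$'' and then $\omega_1|_{\mu_{q-1}}=\omega_2|_{\mu_{q-1}}$ also needs care: in the level-zero $\varepsilon$-factor, $\theta_i(\varpi_E)$ and $\omega_i(\eta)^{-a_i}$ enter only through their \emph{product}, so they cannot be separated from level-zero data alone — the paper supplies the missing equation $\omega_1=\omega_2$ from stability. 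So: same strategy, but your version carries an avoidable and substantial computational burden at $a(\chi)=2$, and the extraction of $\omega_i|_{\mu_{q-1}}$ from monomials in $\nu$ is not justified without the stability input you list as merely optional.
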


\begin{remark}
The tameness condition $(n, p)=1$ is crucially used in the argument, but it is reasonable to believe the result should hold without it.
\end{remark}

\begin{remark}
In a recent preprint \cite{AL13}, $\textnormal{Moshe~Adrian}$ and $\textnormal{Baiying~Liu}$ have also obtained the same result as Theorem \ref{main result}, via a different method.
\end{remark}

\end{section}

\begin{section}{Preliminary facts}
In this section, we recall some well-known facts, for which we also include a sketched proof and detailed references.
\begin{proposition}\label{initial pro}
Let $\pi_1$ and $\pi_2$ be two cuspidal representations of $G$, such that
\begin{equation}\label{initial condition}
\varepsilon(\chi\pi_1, s, \psi)=\varepsilon(\chi\pi_2, s, \psi),
\end{equation}
for all characters $\chi$ of $F^\times$ and $s\in \mathbb{C}$. Then

$(\textnormal{i})$~~The identity \eqref{initial condition} holds when $\psi$ is replaced by any additive character of $F$.

$(\textnormal{ii})$~~$\pi_1$ and $\pi_2$ have the same central characters, i.e., $\omega_{\pi_1}=\omega_{\pi_2}.$

$(\textnormal{iii})$~~$\pi_1$ and $\pi_2$ have the same normalized level, i.e., $l(\pi_1)=l(\pi_2)$.
\end{proposition}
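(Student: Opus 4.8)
The plan is to make part (ii) the heart of the matter: part (iii) is a short, essentially independent argument using only the shape of $\varepsilon$-factors, and part (i) then drops out of (ii) by a formal manipulation. I would carry out the three steps in the order (iii), (ii), (i), and I expect the only genuine difficulty to sit in (ii). Throughout one may assume $n\geq 2$, the case $n=1$ being analogous and easier.

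\emph{Part (iii).} I would specialise \eqref{initial condition} to $\chi=\mathbf{1}$. For a cuspidal representation $\pi$ of $G$ the local constant $\varepsilon(\pi,s,\psi)$ is, up to a factor in $\mathbb{C}^{\times}$, a power of $q^{-s}$, the exponent of which is the conductor exponent $a(\pi)$ plus a quantity depending only on $\psi$ and $n$; comparing the two sides of \eqref{initial condition} as functions of $s$ therefore forces $a(\pi_1)=a(\pi_2)$. I would finish by quoting the standard dictionary between the conductor exponent and the normalised level of a cuspidal representation of $GL(n,F)$ — for fixed $n$ the former is a strictly increasing function of the latter, in fact $a(\pi)=n\bigl(l(\pi)+1\bigr)$ — whence $l(\pi_1)=l(\pi_2)$.

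\emph{Part (ii).} The key input is the stability of local constants under highly ramified twists (Deligne--Henniart on the Galois side, Jacquet--Shalika on the automorphic side): there is an integer $N$, which one may choose to work for $\pi_1$ and $\pi_2$ simultaneously, such that for every character $\chi$ of $F^{\times}$ with conductor exponent $\geq N$ one has
\begin{equation*}
\varepsilon(\chi\pi_i,s,\psi)=\varepsilon(\chi,s,\psi)^{\,n-1}\,\varepsilon(\omega_{\pi_i}\chi,s,\psi)\qquad(i=1,2).
\end{equation*}
Substituting \eqref{initial condition} and cancelling the common factor $\varepsilon(\chi,s,\psi)^{n-1}$ yields $\varepsilon(\omega_{\pi_1}\chi,s,\psi)=\varepsilon(\omega_{\pi_2}\chi,s,\psi)$ for all $s\in\mathbb{C}$ and all sufficiently ramified $\chi$. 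A standard analysis of the $GL(1)$ local constants then completes the argument: writing these $\varepsilon$-factors through normalised Gauss sums, equality for all $s$ gives equality of conductors and of the Gauss sums; letting $\chi$ run over the characters of a fixed large conductor $a$ and using the ``critical point'' shift formula $\tau(\lambda\xi,\psi)=\lambda(c_\xi)\,\tau(\xi,\psi)$ (valid when $a(\lambda)<\lceil a/2\rceil$, with $c_\xi\in\mathfrak{o}_F^{\times}$ exhausting $\mathfrak{o}_F^{\times}/(1+\mathfrak{p}_F^{\lceil a/2\rceil})$ as $\xi$ varies) forces $\omega_{\pi_1}\omega_{\pi_2}^{-1}$ to be unramified; the residual unramified twist is killed by comparing the powers of $q^{-s}$, giving $\omega_{\pi_1}=\omega_{\pi_2}$.

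\emph{Part (i).} This is now formal. Every additive character of $F$ is of the form $\psi_a\colon x\mapsto\psi(ax)$ for a unique $a\in F^{\times}$, and for a cuspidal representation $\tau$ of $G$ one has $\varepsilon(\tau,s,\psi_a)=\omega_{\tau}(a)\,\lambda(a,s)\,\varepsilon(\tau,s,\psi)$, where $\lambda(a,s)$ (explicitly $|a|_F^{\,n(s-1/2)}$) depends only on $a$, $s$ and $n$. Applying this with $\tau=\chi\pi_1$ and $\tau=\chi\pi_2$ and noting that, by (ii), the central characters $\omega_{\chi\pi_1}=\chi^{n}\omega_{\pi_1}$ and $\omega_{\chi\pi_2}=\chi^{n}\omega_{\pi_2}$ coincide, the identity \eqref{initial condition} propagates verbatim from $\psi$ to $\psi_a$. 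The one genuinely non-trivial ingredient, and hence the step I expect to require the most care (and, if one wants a self-contained treatment, the most work), is the stability statement invoked in Part (ii); everything else is routine bookkeeping with the standard formalism of $\varepsilon$-factors.
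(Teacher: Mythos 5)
Your proof is correct and follows essentially the same route as the paper. Part~(iii) is identical: specialize to $\chi=\mathbf{1}$ and compare exponents of $q^{-s}$ (the paper quotes the identity $\varepsilon(\pi,s,\psi)=q^{n\,l(\pi)(\frac12-s)}\varepsilon(\pi,\frac12,\psi)$ from Bushnell--Henniart--Kutzko, which is precisely your ``conductor exponent versus level'' dictionary). For part~(ii), the paper's key input is its Lemma~\ref{stability}, a stability formula of the form $\varepsilon(\chi\pi,s,\psi)=\omega_\pi(c)^{-1}\varepsilon(\chi\circ\det,s,\psi)$ for highly ramified $\chi$ with ``Gauss-sum parameter'' $c$; combined with Lemma~\ref{1=n lemma} ($\varepsilon(\chi\circ\det,s,\psi)=\varepsilon(\chi,s,\psi)^n$) this is equivalent to your Deligne--Henniart/Jacquet--Shalika form $\varepsilon(\chi\pi,s,\psi)=\varepsilon(\chi,s,\psi)^{n-1}\varepsilon(\omega_\pi\chi,s,\psi)$, and the final step of varying $c$ over $\mathfrak o_F^\times$ modulo a deep congruence subgroup to pin down $\omega_\pi$ is what the paper cites from 27.4 of Bushnell--Henniart. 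Your $GL(1)$ Gauss-sum argument is a slightly more explicit rendering of the same thing. The one place you genuinely improve on the paper's exposition is part~(i): the paper asserts it is ``direct from the definition,'' but the change-of-$\psi$ formula $\varepsilon(\tau,s,\psi_a)=\omega_\tau(a)\,|a|_F^{n(s-1/2)}\varepsilon(\tau,s,\psi)$ really does produce a factor $\omega_{\chi\pi_i}(a)=\chi(a)^n\omega_{\pi_i}(a)$, so one needs $\omega_{\pi_1}=\omega_{\pi_2}$, i.e.\ part~(ii), to conclude. Your ordering (iii)$\to$(ii)$\to$(i) makes this dependence explicit and is logically cleaner than the paper's (i)$\to$(iii)$\to$(ii).
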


\begin{proof}
$\textnormal{(i)}$ is direct from the definition, combined with \eqref{initial condition}. For an irreducible cuspidal representation of $G$, the identity
\begin{equation}\label{s to half lemma}
\varepsilon(\pi, s, \psi)= q^{n\cdot l(\pi)(\frac{1}{2}-s)}\varepsilon(\pi,\frac{1}{2} , \psi)
\end{equation}
holds (see $6.1.2$ in \cite{BHK1998} and note that $\psi$ is chosen to be level one), from which $(\textnormal{iii})$ follows.

$(\textnormal{ii})$ follows from the following Lemma, as in $27.4$ of \cite{BH2006}.
\begin{lemma}\label{stability}
Let $\pi$ be a cuspidal representation of $G$ and let $\chi$ be a character of $F^{\times}$, such that,
\begin{center}
$m=l(\chi)> 2 l(\pi)$,
\end{center}
where $l(\chi)$ is the level of $\chi$. Let $c$ be an element in $F^\times$ such that $\chi(1+x)= \psi (c\cdot x)$ for $x\in \mathfrak{p}^{[\frac{m}{2}]+1}$, then
\begin{center}
$\varepsilon(\chi\pi, s, \psi)=\omega_{\pi}(c)^{-1}\varepsilon(\chi\circ \textnormal{det}, s, \psi)$.
\end{center}

\end{lemma}

\begin{proof}
This is a minor refinement of a Lemma of Jacquet-Shalika \cite{JS1985}. We include a detailed proof in Appendix \ref{appendix} for the reader's convenience, following \cite{BH2006}.
\end{proof}

This completes the proof of Proposition \ref{initial pro}.
\end{proof}

\medskip

Let $\mathbb{P}_{n}(F)$ be the set of isomorphism classes of admissible pairs of degree $n$, and $\mathcal{A}_{n}^{et}(F)$ be the set of isomorphism classes of essentially tame cuspidal representations of $G$. For the exact definitions of admissible pairs and essentially tame cuspidal representations, see \cite{BH2005}.
\begin{theorem}\label{b-h}
There is a natural bijection between $\mathbb{P}_{n}(F)$ and $\mathcal{A}_{n}^{et}(F)$.
\end{theorem}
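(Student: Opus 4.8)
The plan is to realize the bijection explicitly through the theory of simple types, in the spirit of Howe's original construction and its reworking by Bushnell--Kutzko and Bushnell--Henniart. Starting from an admissible pair $(E/F,\xi)$ with $[E:F]=n$ and $E/F$ tamely ramified, I would first embed $E$ as an $F$-subalgebra of $M_n(F)$ via its regular action on $E\cong F^n$; a generator $\beta$ of $E$ whose filtration matches the level of $\xi$ then defines a simple stratum, and the character $\xi$ determines a maximal simple type $(\mathbf{J}_\xi,\lambda_\xi)$ in $G$. Compact induction $\pi_\xi=\operatorname{c-Ind}_{\mathbf{J}_\xi}^{G}\lambda_\xi$ produces an irreducible cuspidal representation, which is essentially tame precisely because $E/F$ is tame. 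Checking that different realizations of $E$ are $G$-conjugate and that equivalent admissible pairs give $G$-conjugate types shows the assignment $(E/F,\xi)\mapsto\pi_\xi$ is a well-defined map $\mathbb{P}_n(F)\to\mathcal{A}_n^{et}(F)$.

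Injectivity would follow from the intertwining and conjugacy theorems of Bushnell--Kutzko: an irreducible cuspidal representation contains a maximal simple type unique up to $G$-conjugacy, so from $\pi_\xi$ one recovers the field $F[\beta]\cong E$ up to $F$-isomorphism together with the underlying character up to the equivalence built into the definition of $\mathbb{P}_n(F)$; hence $\pi_{\xi_1}\cong\pi_{\xi_2}$ forces $(E_1/F,\xi_1)\cong(E_2/F,\xi_2)$. For surjectivity, given $\pi\in\mathcal{A}_n^{et}(F)$, by definition (after a twist by a character of $F^\times$) $\pi$ contains a maximal simple type attached to a tame extension $E/F$ of degree dividing $n$; I would extract from this type a character of $E^\times$ and verify it is admissible of degree exactly $n$, the point being that failure of admissibility --- $\xi$ factoring through a norm map to a proper subfield --- would force the type to be non-maximal, contradicting cuspidality of $\pi$ at level $l(\pi)$.

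The main obstacle is the bookkeeping in these last two steps. A maximal simple type does not literally carry a character of $E^\times$ but a representation of a compact-mod-centre group $\mathbf{J}$, and recovering $\xi$ from $\lambda_\xi$ is only possible up to the finite ambiguity coming from the choice of $\beta$-extension and of the Heisenberg lift. One must show that the equivalence relation packaged into the notion of an admissible pair --- invariance under $\operatorname{Gal}(E/F)$ together with the level/conductor normalizations --- matches exactly the fibres of $\xi\mapsto\lambda_\xi$, so that the correspondence is a genuine bijection rather than merely finite-to-finite; comparing central characters and exploiting twists by characters of $F^\times$ is the standard mechanism for rigidifying these choices. Once this is settled, the naturality asserted in the theorem (compatibility with the $F^\times$-twist actions on both sides) is immediate from the construction, and one also reads off that $l(\pi_\xi)$ equals the normalized level determined by the conductor of $\xi$, which is what makes the theorem usable for simple cuspidals.
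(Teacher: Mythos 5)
The paper does not actually prove this statement---its ``proof'' is a bare citation of Theorem~2.3 of Bushnell--Henniart, \emph{The essentially tame local Langlands correspondence, I} (the reference \cite{BH2005}). So your sketch is not being compared against an argument in the paper; it is being compared against the content of that reference. Read that way, your outline is a fair high-level roadmap of the Bushnell--Kutzko/Bushnell--Henniart construction, and you are commendably honest that the ``main obstacle'' you flag (a maximal simple type is a representation of a compact-mod-centre group $\mathbf{J}$, not a character of $E^\times$, and there is genuine ambiguity in the choice of $\beta$-extension and Heisenberg lift) is not a detail but the technical heart of the matter. Two concrete points are worth adding. First, the construction in the reference does not simply embed $E$ and take a simple stratum with entry $\beta\in E$: it factors through an intermediate field $E'$ with $E/E'$ unramified, choosing a character $\phi$ of $E'^\times$ with $\xi|_{U^1_E}=\phi\circ N_{E/E'}$ and building the stratum from $\phi$; this is exactly the bookkeeping the author invokes in the footnote to Proposition~2.4(i) to compute $l(\pi_{E,\theta})$, so glossing it as ``embed $E$, choose $\beta$'' would lose the very level formula the rest of the paper relies on. Second, your account of admissibility is incomplete: an admissible pair must satisfy \emph{two} conditions---$\xi$ does not factor through $N_{E/K}$ for any proper intermediate $K$, and if $\xi|_{U^1_E}$ factors through $N_{E/K}$ then $E/K$ is unramified---and your surjectivity argument only engages the first. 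As it stands your text is a plausible plan for reconstructing the cited theorem, not a proof; for the purposes of this paper a citation (which is what the author gives) is the appropriate substitute.
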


\begin{proof}
Theorem $2.3$, \cite{BH2005}.
\end{proof}

Denote by $\pi_{E, \theta}$ the cuspidal representation which arises from an admissible pair $(E/F,~ \theta)$, via the above theorem. The content we need from Theorem $2.3$ of \cite{BH2005} is summarized in the following proposition:

\begin{proposition}\label{property from b-h}
$(\textnormal{i})$~~$l(\pi_{E, \theta})=\frac{l(\theta)}{e(E/F)}$, where $e(E/F)$ is the ramification index of $E/F$.

$(\textnormal{ii})$~~$\omega_{\pi_{E, \theta}}=\theta|_{F^\times}.$

$(\textnormal{iii})$~~$\chi\cdot \pi_{E, \theta}\cong \pi_{E, \chi_{E}\cdot\theta}$, where $\chi_{E}= \chi\circ N_{E/F}$.
\end{proposition}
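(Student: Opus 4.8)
The plan is to read off all three items from the explicit construction of $\pi_{E,\theta}$ recalled in \cite[Theorem~2.3]{BH2005}, so I first isolate what that construction provides. Write $e=e(E/F)$ and $f=f(E/F)$, so $n=ef$. To the admissible pair $(E/F,\theta)$ one attaches a simple stratum $[\mathfrak{A},m,0,\beta]$ in $M_n(F)$ with $F[\beta]\cong E$, where $E$ acts on an $n$-dimensional $F$-space and $\mathfrak{A}$ is the hereditary $\mathfrak{o}_F$-order normalized by $E^\times$ (so $e(\mathfrak{A})=e$), together with an extended maximal simple type $\Lambda$ on $\mathbf{J}=E^\times\mathbf{J}^1$ such that $\pi_{E,\theta}=\mathrm{c\text{-}Ind}_{\mathbf{J}}^{G}\Lambda$: here $\Lambda|_{\mathbf{J}^1}$ is the Heisenberg representation attached to the simple character determined by $\theta$, while $\Lambda|_{E^\times}$ agrees with $\theta$ up to the tamely ramified rectifier character $\mu_\theta$ and the Heisenberg factor. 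The single structural input used throughout is that, under the embedding $E^\times\hookrightarrow G$ coming from this $\beta$, one has $\det(x)=N_{E/F}(x)$ for $x\in E^\times$.

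With this in hand, (iii) is essentially formal: twisting gives $\chi\pi_{E,\theta}=\mathrm{c\text{-}Ind}_{\mathbf{J}}^{G}\bigl(\Lambda\otimes(\chi\circ\det)\bigr)$, and since $(\chi\circ\det)|_{E^\times}=\chi\circ N_{E/F}=\chi_E$, one checks that $\Lambda\otimes(\chi\circ\det)$ is precisely the extended maximal simple type attached to $(E/F,\chi_E\theta)$; the only thing requiring attention is that $\chi\circ\det$ interacts correctly with the simple character and with the Heisenberg data, which is exactly the compatibility recorded in \cite{BH2005}. (On the Galois side, (iii) is just the projection formula $\mathrm{Ind}_{W_E}^{W_F}(\xi)\otimes\chi\cong\mathrm{Ind}_{W_E}^{W_F}(\xi\otimes\chi_E)$ together with compatibility of the tame parametrization with the Langlands correspondence.) For (ii), the center of $G$ is $F^\times\subset E^\times\subset\mathbf{J}$, so the central character of a compact induction from $\mathbf{J}$ is simply $\Lambda|_{F^\times}$; unwinding $\Lambda|_{E^\times}$ and using that $\mu_\theta|_{F^\times}$ is precisely the discriminant/sign correction built into the rectifier gives $\omega_{\pi_{E,\theta}}=\theta|_{F^\times}$. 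As a consistency check, (ii) and (iii) are compatible because $\omega_{\chi\pi}=\chi^n\,\omega_\pi$ while $(\chi_E\theta)|_{F^\times}=\chi^n\cdot\theta|_{F^\times}$, since $N_{E/F}$ restricts to the $n$-th power map on $F^\times$.

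For (i) I would argue via conductors. The tame parametrization sends $\pi_{E,\theta}$ to $\sigma=\mathrm{Ind}_{W_E}^{W_F}(\mu_\theta\theta)$, so $a(\pi_{E,\theta})=a(\sigma)$ since the Langlands correspondence preserves $\varepsilon$-factors and hence Artin conductors. Inductivity of conductors gives $a(\sigma)=f\cdot a(\mu_\theta\theta)+d(E/F)$, tameness of $E/F$ gives $d(E/F)=f(e-1)$, and because $\mu_\theta$ is tame and $\theta$ is ramified (admissibility) one has $a(\mu_\theta\theta)=a(\theta)=l(\theta)+1$. Feeding this into the relation $a(\pi)=n\bigl(l(\pi)+1\bigr)$ between the conductor exponent and the normalized level of a cuspidal $\pi$ of $G$ --- compatible with the exponent of $q$ in \eqref{s to half lemma} --- a one-line computation yields $l(\pi_{E,\theta})=l(\theta)/e(E/F)$; the same can also be read directly off the level $m$ of the defining stratum. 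I expect (ii) to be the real obstacle: (iii) is formal and (i) is bookkeeping, but pinning down the central character honestly forces one to engage with the exact normalization of the rectifier, so in the write-up the cleanest course is to cite \cite[Theorem~2.3]{BH2005} for (ii) rather than reprove it. The one genuine pitfall running through all three parts is to keep the two notions of level strictly apart: the integer level $l(\theta)$ of a character of $E^\times$ versus the $\tfrac{1}{e}\mathbb{Z}$-valued normalized level $l(\pi)$ of a representation of $GL(n,F)$.
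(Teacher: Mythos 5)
Your argument is correct, and for parts (ii) and (iii) it lands in essentially the same place as the paper, which simply cites Proposition~2.4 of \cite{BH2005} (you wrote Theorem~2.3, but Proposition~2.4 is the statement that actually records $\omega_{\pi_{E,\theta}}=\theta|_{F^\times}$ and the twisting formula). Your own sketch of (iii) via compact induction and $\det|_{E^\times}=N_{E/F}$ is the right mechanism, and your instinct to not reprove the rectifier bookkeeping for (ii) but cite it matches what the paper does.

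For (i) your route genuinely differs from the paper's. You pass through the tame local Langlands correspondence and compute Artin conductors: $a(\pi_{E,\theta})=f\,a(\theta)+f(e-1)$ by conductor--discriminant inductivity and tameness, then invert $a(\pi)=n(l(\pi)+1)$ to get $l(\pi_{E,\theta})=l(\theta)/e$. That computation is correct (check: $n(l+1)=f(l(\theta)+1)+f(e-1)=f\,l(\theta)+n$, so $l=l(\theta)/e$). The paper instead reads the level directly off the simple stratum $[\mathfrak{A},l,0,\beta]$ in the construction of 2.3 of \cite{BH2005}: the stratum has integer level $l=l(\theta)$ (via the auxiliary subfield $E'\subset E$ with $E/E'$ unramified, so the level of $\phi$ on $E'$ equals $l(\theta)$) and the hereditary order $\mathfrak{A}$ has ramification index $e(E/F)$, so the normalized level is $l(\theta)/e(E/F)$ by definition. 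The paper's route is more elementary --- no Galois side, no conductor--discriminant formula --- while yours is cleaner if one is already comfortable with the Langlands compatibility; you yourself flag that the stratum-reading alternative exists. Either is acceptable; the paper's version has the advantage of staying entirely on the $GL_n$ side and only invoking the piece of the BH construction that is needed.
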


\begin{proof}
 $\textnormal{(ii)}$ and $\textnormal{(iii)}$ are the contents of Proposition $2.4$, \cite{BH2005}. $\textnormal{(i)}$ can be easily concluded from the constructions in $2.3$, \cite{BH2005}.\footnote{To be consistent, we use the notations of 2.3 of \cite{BH2005}. In 2.3 of \cite{BH2005}, the cuspidal representation $_F \pi _{\xi}$ constructed from an admissible pair $(E/F, \xi)$ contains some simple character arising from a simple stratum $[\mathfrak{A},l, 0, \beta]$, where $l= l(\phi)$, and $\phi$ is a character of $E'^{\times}$ chosen to satisfy $\xi\mid U^{1}_E= \phi\circ N_{E/E'}$. As $E/E'$ is unramified, one has $l(\phi)= l(\xi) (\geq 1)$. Recall the definition of normalized level, the construction in 2.3 of \cite{BH2005} implies $l(_F \pi _{\xi})= \frac{l}{e(E'/F)}$ .}
\end{proof}

For the argument in Section \ref{argument of main result}, we need to recall the local constant of a simple cuspidal representation $\pi$ arising from an admissible pair $(E/F, \theta)$.

We assume $(n, p)=1$ in the following Proposition.
\begin{proposition}\label{local cuspidal of a simple pi}
Let $\pi_{E, \theta}$ be a cuspidal representation arising from an admissible pair $(E/F, \theta)$, where $E/F$ is a totally ramified extension of degree $n$ (hence $(n, p)=1$) and $\theta$ is a character of $E^\times$ and of level $2k+1$ for some $k\geq 0$. Choose  $\alpha\in \mathfrak{p}^{-(2k+1)}_E/\mathfrak{p}^{-k}_E$, such that $\theta(1+x)= \psi_{E/F}(\alpha x)$ for $x\in \mathfrak{p}^{k+1}_{E}$, where $\psi_{E/F}$ is $\psi\circ \text{tr}_{E/F}$. Then,
\begin{center}
$\varepsilon(\pi_{E, \theta}, \frac{1}{2}, \psi)= \theta(\alpha)^{-1}\psi_{E/F}(\alpha).$
\end{center}

\end{proposition}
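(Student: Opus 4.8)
The plan is to compute the local constant via the explicit formula of Bushnell–Henniart for essentially tame cuspidal representations, specializing it to the totally ramified case of degree $n$ with $(n,p)=1$. Recall that $\pi_{E,\theta}$ is compactly induced from a representation $\Lambda$ of a compact-mod-centre subgroup $J$, where $J$ normalizes a chain order $\mathfrak{A}$ in $M_n(F)$; in the totally ramified case $\mathfrak{A}$ is the standard Iwahori-type hereditary order with $e(\mathfrak{A})=n$, and $E$ embeds in $M_n(F)$ so that $\mathfrak{o}_E = E \cap \mathfrak{A}$ and a prime element $\varpi_E$ of $E$ maps to a matrix whose $n$-th power is (up to a unit) the scalar $\varpi$. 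The character $\theta$, via this embedding, gives the simple stratum $[\mathfrak{A}, 2k+1, 0, \beta]$ with $\beta = \alpha$ playing the role of the defining element, since $v_E(\alpha) = -(2k+1)$ and $\theta(1+x) = \psi_{E/F}(\alpha x)$ for $x \in \mathfrak{p}_E^{k+1}$.

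First I would invoke the formula for $\varepsilon(\pi, \tfrac{1}{2}, \psi)$ for a cuspidal representation containing a simple character: up to a sign or Gauss-sum factor coming from the finite-group piece, it is a Gauss sum $\tau$ attached to the simple character $\theta_\beta$ and the additive character $\psi_\mathfrak{A}(x) = \psi \circ \operatorname{tr}_{M_n(F)/F}(\beta x)$, evaluated on the quotient $J^1/H^1$ (see the companion results in \cite{BHK1998}, e.g. the formulas in Section 6). The key simplification in the simple (totally ramified, prime-to-$p$) case is that the level is $1/n$, so $k$ is as small as possible relative to $\mathfrak{A}$; concretely $H^1 = 1 + \mathfrak{p}_E^{k+1} + (\text{off-diagonal part})$ and the finite group $J^1/H^1$ is either trivial or very small, so that the Gauss sum degenerates to the single term $\theta(\alpha)^{-1}\psi_{E/F}(\alpha)$. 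Here the crucial point is that $\psi \circ \operatorname{tr}_{M_n(F)/F}$ restricted to the image of $E$ equals $\psi \circ \operatorname{tr}_{E/F} = \psi_{E/F}$, which matches the normalization in the statement, and $\theta(\alpha)^{-1}$ is exactly the value by which one must untwist $\Lambda$ on the coset $\alpha$.

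The main obstacle will be bookkeeping the normalizations: keeping track of the shift between $\varepsilon(\pi, s, \psi)$ and $\varepsilon(\pi, \tfrac{1}{2}, \psi)$ (handled by \eqref{s to half lemma}), the precise additive character used to define the Gauss sum (level one $\psi$, and the trace form on $M_n(F)$ versus on $E$), and verifying that in the simple case the ``finite abelian group'' contribution to the Gauss sum is trivial, i.e. that the relevant quotient of lattices is a single point — this is where $(n,p)=1$ together with the minimality of the level $\tfrac{1}{n}$ is used decisively. Once these normalizations are pinned down, the claimed closed form $\varepsilon(\pi_{E,\theta}, \tfrac12, \psi) = \theta(\alpha)^{-1}\psi_{E/F}(\alpha)$ drops out directly from the Bushnell–Henniart formula. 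I would organize the write-up by (1) recalling the construction of $\pi_{E,\theta}$ and the embedding of $E$, (2) quoting the Gauss-sum expression for the epsilon factor, (3) checking the additive-character and level normalizations, and (4) collapsing the Gauss sum to one term.
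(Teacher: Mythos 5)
Your plan is in the same spirit as the paper's (both rely on the Bushnell--Henniart Gauss-sum expression for $\varepsilon(\pi,\tfrac12,\psi)$ and the observation that it collapses to a single term), and the paper itself just cites Proposition~1 of~6.3 in \cite{BH1999} together with the construction in 2.3 of \cite{BH2005}. However, your re-derivation has two substantive gaps. First, you restrict to normalized level $\tfrac1n$, i.e.\ to $k=0$; but the Proposition is stated for arbitrary $k\geq 0$ with $l(\pi_{E,\theta})=\tfrac{2k+1}{n}$, and the paper actually invokes it for general $k$ in the reduction step of Section~3.1 before specializing to $k=0$ in Section~3.2, so the argument cannot assume $k=0$. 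Second, and more importantly, your explanation for why the Gauss sum degenerates to one term is not correct: the size of $J^1/H^1$ is irrelevant (indeed $J^1=H^1$ in the tame case, but this is not the quotient over which the sum is taken). The Gauss sum $\tau(\Lambda,\psi)$ runs over $U_{\mathfrak{A}}^{[(l(\Lambda)+1)/2]}/U_{\mathfrak{A}}^{[l(\Lambda)/2]+1}$, where $\Lambda$ is the central simple type. Because $E/F$ is totally ramified of degree $n$, the principal order $\mathfrak{A}$ has $e_{\mathfrak{A}}=n$, so $l(\Lambda)=e_{\mathfrak{A}}\cdot l(\pi)=2k+1$ is \emph{odd}; hence $[(l(\Lambda)+1)/2]=[l(\Lambda)/2]+1=k+1$, the quotient is trivial, and the sum reduces to the single term $y=1$ for \emph{every} $k$, not just $k=0$. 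What actually makes the collapse work is the oddness of $l(\Lambda)$, a consequence of $e_{\mathfrak{A}}=n$ and $l(\theta)=2k+1$, not the minimality of the level. Finally, even after the collapse one still has to check that the normalization $(\mathfrak{A}:\mathfrak{P}^{1+l(\Lambda)})^{-1/2}$ cancels against the constant $c_1=(U_{\mathfrak{A}}^{[l(\Lambda)/2]+1}:U_{\mathfrak{A}}^{l(\Lambda)+1})/\dim\Lambda$ (here $\dim\Lambda=1$ since the pair is tame and totally ramified), and that restricting $\Lambda$ to $E^\times$ produces the factor $\theta(\alpha)^{-1}$; neither of these bookkeeping points is addressed in your proposal, though they are precisely the ``normalizations'' you flag as the main obstacle.
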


\begin{proof}
This follows directly from Proposition $1$ of $6.3$ in \cite{BH1999} and the construction of $\pi_{E, \theta}$ from an admissible pair $(E/F, \theta)$ ($2.3$, \cite{BH2005}).
\end{proof}

\end{section}

\smallskip

\begin{section}{Proof of Theorem 1.1}\label{argument of main result}

The strategy of using admissible pairs was suggested by Professor Guy Henniart.

\begin{subsection}{Reduction to the same field extension}

From now on, we assume $(n, p)=1$.

Let $\pi_1$ and $\pi_2$ be two cuspidal representations of $G$ of level $\frac{2k+1}{n}$, where $k\geq 0$ and $(n, 2k+1)=1$, which satisfy $\omega_{\pi_1}= \omega_{\pi_2}$. For $i=1, 2$, assume $(E_i /F,  \theta_i)$ is an admissible pair associated to $\pi_i$ via Theorem \ref{b-h}. Then from Proposition \ref{property from b-h}, $E_i/F$ is a totally ramified extension of degree $n$ and $\theta_i$ is a character of $E^{\times}_i$ of level $2k+1$. Also, from Proposition \ref{property from b-h}, the restrictions of $\theta_1$ and $\theta_2$ to $F^{\times}$ coincide.

\begin{proposition}\label{reduce to the same field}

Assume $\pi_1$ and $\pi_2$ satisfy the condition \eqref{basic assumption} in Theorem \ref{main result}. Then $E_1$ is isomorphic to $E_2$ over $F$.

\end{proposition}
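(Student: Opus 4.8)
The plan is to compare the two admissible pairs $(E_1/F,\theta_1)$ and $(E_2/F,\theta_2)$ using the explicit formula for $\varepsilon(\pi_{E,\theta},\tfrac12,\psi)$ from Proposition \ref{local cuspidal of a simple pi}, together with the twisting behaviour from Proposition \ref{property from b-h}(iii). Since $E_i/F$ is totally ramified of degree $n$ with $(n,p)=1$, each $E_i$ is a tamely ramified extension of $F$, hence (up to $F$-isomorphism) $E_i = F(\varpi_i^{1/n})$ for some prime element, and in fact there are only finitely many such extensions inside a fixed algebraic closure, all obtained by adjoining an $n$-th root of $\varpi$ times a root of unity. So proving $E_1\cong_F E_2$ amounts to ruling out the finitely many ways they could differ. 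First I would use Proposition \ref{initial pro}(i) together with \eqref{s to half lemma} to reduce \eqref{basic assumption} to the statement that $\varepsilon(\chi\pi_1,\tfrac12,\psi) = \varepsilon(\chi\pi_2,\tfrac12,\psi)$ for \emph{all} characters $\chi$ of $F^\times$ (the level and central character agreement is already recorded in the setup).

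Next I would feed the twist $\chi\pi_i \cong \pi_{E_i,\chi_{E_i}\theta_i}$ into Proposition \ref{local cuspidal of a simple pi}. For a character $\chi$ of $F^\times$ of suitably high level $m$ (say $m > 2k+1$, well above the level of $\theta_i$), the twisted character $\chi_{E_i}\theta_i = (\chi\circ N_{E_i/F})\cdot \theta_i$ has level essentially governed by $\chi\circ N_{E_i/F}$; since $E_i/F$ is totally ramified of degree $n$, $N_{E_i/F}$ sends $\mathfrak p_{E_i}^j$ into roughly $\mathfrak p_F^{\lceil j/n\rceil}$-type behaviour on units, so $\chi\circ N_{E_i/F}$ has level $nm$ on $E_i^\times$ when $\chi$ has level $m$ on $F^\times$. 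The associated element $\alpha_i$ in the formula is then, modulo lower terms, determined by the element $c\in F^\times$ with $\chi(1+x)=\psi(cx)$, pushed through the different/trace duality between $E_i$ and $F$: concretely $\alpha_i \equiv c\cdot(\text{something built from a prime element of }E_i)$. The equality $\theta_i(\alpha_i)^{-1}\psi_{E_i/F}(\alpha_i)$ being independent of $i$, as $\chi$ (hence $c$) ranges over a large set, should pin down both the field $E_i$ (through the term $\psi_{E_i/F}(\alpha_i) = \psi(\mathrm{tr}_{E_i/F}\alpha_i)$, which sees the different $\mathfrak d_{E_i/F}$) and enough of $\theta_i$. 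I would extract the field first: varying $c$ over a coset and tracking how $\mathrm{tr}_{E_i/F}(\alpha_i)$ depends on $c$ isolates an invariant of $E_i/F$ — heuristically the image of the prime element $\varpi_i$ used to uniformize $E_i$, read modulo $n$-th powers and modulo $1+\mathfrak p_F$ — which is exactly the datum classifying tame totally ramified degree-$n$ extensions.

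The main obstacle I expect is the bookkeeping in the second step: one has to choose $\chi$ (equivalently $c$) judiciously so that the lower-order ambiguities in $\alpha_i$ (it is only defined modulo $\mathfrak p_{E_i}^{-k}$, and $\chi_{E_i}\theta_i$ only approximately splits as a product of characters of controlled level) do not swamp the field-dependent leading term, and so that the Gauss-sum-type quantity $\theta_i(\alpha_i)^{-1}\psi_{E_i/F}(\alpha_i)$ genuinely separates non-isomorphic $E_i$. A clean way to organize this is to take $\chi$ ranging over all characters of level exactly $m$ for a single large $m$ coprime to $n$; then $\chi_{E_i}\theta_i$ has odd level $nm$ (using $(n,p)=1$ and $(nm, \cdot)$ parity), Proposition \ref{local cuspidal of a simple pi} applies with $k$ replaced by $(nm-1)/2$, and the leading behaviour of $\psi(\mathrm{tr}_{E_i/F}\alpha_i)$ as a function of $c$ is an additive character of $k_F$ whose "slope" encodes $\varpi_i \bmod (F^\times)^n(1+\mathfrak p_F)$. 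Matching these slopes for $i=1,2$ forces $\varpi_1 \equiv \varpi_2$ modulo $n$-th powers up to units, hence $E_1\cong_F E_2$. I would present the argument in this order: (1) reduce to $s=\tfrac12$; (2) compute the level and the element $\alpha_i$ for the high-level twist; (3) isolate the field invariant and conclude.
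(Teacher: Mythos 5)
There is a fundamental problem with your plan to use characters $\chi$ of high level $m>2k+1$: for such $\chi$, the stability lemma (Lemma \ref{stability}) gives
$\varepsilon(\chi\pi_i,s,\psi)=\omega_{\pi_i}(c)^{-1}\varepsilon(\chi\circ\det,s,\psi)$,
which depends on $\pi_i$ only through the central character $\omega_{\pi_i}$. Since $\omega_{\pi_1}=\omega_{\pi_2}$ is already forced by Proposition \ref{initial pro}(ii), the equality of $\varepsilon$-factors for high-level twists holds \emph{automatically} and carries no information about the field $E_i$. Concretely, in the formula of Proposition \ref{local cuspidal of a simple pi}, once $nm>2(2k+1)$ the element $\alpha_i$ may be taken to be $c\in F$ itself modulo the allowed ambiguity $\mathfrak p_{E_i}^{-(nm-1)/2}$ (the contribution of $\theta_i$, of valuation $-(2k+1)$, is absorbed into that ambiguity), and then $\psi_{E_i/F}(\alpha_i)=\psi(\mathrm{tr}_{E_i/F}(c))=\psi(nc)$ and $\chi_{E_i}(\alpha_i)=\chi(N_{E_i/F}(c))=\chi(c)^n$, both manifestly independent of $E_i$. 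The ``slope'' you hope to extract is identically zero. Also, the heuristic that $\psi_{E_i/F}(\alpha_i)$ encodes the different $\mathfrak d_{E_i/F}$ cannot distinguish the extensions here: every tame totally ramified degree-$n$ extension has the same different $\mathfrak p_E^{n-1}$.

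The information that does separate the $E_i$ sits at the opposite end: twists by \emph{level-zero} characters $\chi$, which is what the paper uses. For such $\chi$, $\chi_{E_i}$ is trivial on $1+\mathfrak p_{E_i}$, so the element $\alpha_i$ is the one attached to $\theta_i$ alone, and the $\chi$-dependence of $\varepsilon(\chi\pi_i,\tfrac12,\psi)$ is carried by $\chi(N_{E_i/F}(\alpha_i))^{-1}$. Writing $E_1=F(\sqrt[n]{\varpi})$, $E_2=F(\sqrt[n]{\varpi\eta^a})$ with $0<a<e=(n,q-1)$, and $\alpha_i=\varpi^{-a'}\varpi_i^{-b}\eta^{a_i}\beta_i$ with $2k+1=a'n+b$, one finds that the ratio of the two norms forces $\chi(\eta^{n(a_1-a_2)-ab})$ to be independent of $\chi$, hence $q-1\mid n(a_1-a_2)-ab$; but $e\mid n(a_1-a_2)$ while $\gcd(b,e)=1$ and $0<a<e$ give $e\nmid ab$, a contradiction. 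Your outline never isolates this level-zero mechanism, and the high-level computation you sketch cannot replace it.
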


\begin{proof}

From Chapter $16$ of \cite{Has1980}, there are in all $e= (n, q-1)$ different totally tamely ramified extension of degree $n$ over $F$, which can be described as:
\begin{center}
$F(\sqrt[n]{\varpi \eta^{r}}), ~0\leq r< e$.
\end{center}
Hence we assume $e> 1$. Assume $E_1$ and $E_2$ are different over $F$. Without loss of generality, we may then assume further that
\begin{center}
$E_1=F(\sqrt[n]{\varpi}),~E_2= F(\sqrt[n]{\varpi \eta^a}), \text{where}~ 0< a<e$.
\end{center}
We note $\varpi_1= \sqrt[n]{\varpi}$ and $\varpi_2=\sqrt[n]{\varpi \eta^a}$ are respectively prime elements in $E_1$ and $E_2$.

 Write $2k+1= a'n+b$,  for $a'\geq 0,~0< b\leq n-1$. Note that $b$ is coprime to $n$.  The additive character $\psi_{E_i}= \psi\circ \text{tr}_{E_{i} /F}$ of $E_i$ is also of level one, as $E_{i}/F$ is a tame extension and $\psi$ is of level one.

As $\theta_i$ is of level $2k+1$, there is a unique $\alpha_i+ \mathfrak{p}^{-k}_{E_i}\in \mathfrak{p}^{-(2k+1)}_{E_i}/\mathfrak{p}^{-k}_{E_i}$ such that
\begin{center}
$\theta_i (1+x)= \psi_{E_i/F}(\alpha_i x),$ for $x\in \mathfrak{p}^{k+1}_{E_i}$.
\end{center}
Write $\alpha_i$ as $\varpi^{-a'}\varpi^{-b}_i \eta^{a_i}\beta_i$, for some $0\leq a_i <q$ and some $\beta_i\in U^{1}_{E_i}= 1+\mathfrak{p}_{E_i}$. Then, using the assumption on the local constants of $\pi_i$ for twists by level zero characters $\chi$ of $F^{\times}$, we are given a family of identities from Proposition \ref{local cuspidal of a simple pi}
\begin{equation}\label{second identity}
\theta_1\cdot\chi_{E_1}(\alpha_1)^{-1}\psi_{E_1 /F}(\alpha_1)=\theta_2\cdot\chi_{E_2}(\alpha_2)^{-1}\psi_{E_2 /F}(\alpha_2).
\end{equation}
We emphasize that $\chi$ is chosen to be of level zero, which is the reason that one can still use $\alpha_i$ in both sides of \eqref{second identity}.

Then from \eqref{second identity}, we get
\begin{equation}\label{third equation}
\chi(\eta^{n(a_1- a_2)-ab})=\theta_1 (\alpha_1)^{-1}\psi_{E_1/F} (\alpha_1)\psi_{E_2 /F}(-\alpha_2)\theta_2 (\alpha_2).
\end{equation}

The left hand side of \eqref{third equation} cannot be  constant when $\chi$ goes through all the level zero characters of $F^{\times}$, as under our assumption $q-1$ does not divide $n(a_1- a_2)-ab$. We get a contradiction.
\end{proof}

\end{subsection}

\begin{subsection}{The case of simple cuspidal representations ($k=0$)}

With the same assumptions as in the last subsection, we carry on to prove the admissible pairs of $\pi_1$ and $\pi_2$ are isomorphic over $F$ when $k=0$. Hence we prove in this case that $\pi_1\cong \pi_2$ by Theorem \ref{b-h}.

From Proposition \ref{reduce to the same field}, one can take $E_1=E_2=E= F(\sqrt[n]{\varpi})$. Denote $\sqrt[n]{\varpi}$ by $\varpi_E$.

Now we repeat a bit more from the last section. Choose $\alpha_i+ \mathfrak{o}_E\in \mathfrak{p}^{-1}_E/\mathfrak{o}_E$, such that
\begin{center}
$\theta_i (1+x)= \psi_{E/F}(\alpha_i x),$ for $x\in \mathfrak{p}_{E}$.
\end{center}
We note the choice of $\alpha_i$ is up to multiplication by $U^{1}_E$. In writing $\alpha_i$ as $\varpi^{-1}_E \eta^{a_i}\beta_i$, for some $0\leq a_i <q$ and some $\beta_i\in U^{1}_{E}= 1+\mathfrak{p}_{E}$, we may assume $\beta_i= 1$. Also we know $\text{tr}_{E/F}(\varpi^{c}_E)=0$ when $n\nmid c$. Hence,
\begin{center}
$\psi\circ \text{tr}_{E/F} (\alpha_i)=1.$
\end{center}

In all, we get a simplified version of \eqref{second identity}
\begin{center}
$\chi (\eta)^{n(a_2-a_1)}= \theta_1 (\varpi_E)^{-1}\theta_2 (\varpi_E)\xi_{\eta}^{a_1-a_2},$
\end{center}
where $\xi_{\eta} =\theta_1 (\eta)=\theta_2 (\eta)$. The left hand side of the above equation is constant for all $\chi$ of level zero, only if $q-1$ divides $n(a_2-a_1)$; as a result $\eta^{a_1-a_2}$ is an $n$-th root of unity in $F$.

Denote by $\sigma$ the automorphism of $E$ over $F$, determined by sending $\varpi_E$ to $\varpi_E \cdot \eta^{a_1-a_2}$ (which is a conjugate of $\varpi_E$). Then one can easily check
\begin{center}
$\theta_1= \theta_2 \circ \sigma$.
\end{center}

This complete the proof of Theorem \ref{main result}.
\end{subsection}

\begin{remark}
As we have seen, under the assumption $k=0$ the situation is essentially simplified, which makes the final argument completely elementary. However, once $k$ becomes larger than zero, it is not clear (to the author) what one should expect for the relations between $\theta_1$ and $\theta_2$, even involving $\chi$ of levels bigger than zero.
\end{remark}

\end{section}

\begin{section}{Appendix A: proof of Lemma \ref{stability}}\label{appendix}
In this appendix we carry out the proof of Lemma \ref{stability}, following the process in 25.7 of \cite{BH2006}. The only difference here is that we include some details on the local constant $\varepsilon(\chi\circ \text{det}, s, \psi)$ of the one-dimensional character $\chi\circ\text{det}$ of $G=GL(n, F)$, for a character $\chi$ of $F^\times$ of level $l\geq 1$. When $n=2$, it is indeed an exercise in the excellent book \cite{BH2006}. On the one hand, we have the following first:
\begin{lemma}\label{1=n lemma}
$\varepsilon(\chi\circ \textnormal{det}, s, \psi)=\varepsilon(\chi, s, \psi)^n.$
\end{lemma}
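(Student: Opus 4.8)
The plan is to prove Lemma \ref{1=n lemma} by reducing the computation of $\varepsilon(\chi\circ\textnormal{det}, s, \psi)$ for the one-dimensional representation $\chi\circ\textnormal{det}$ of $GL(n,F)$ to the computation of the abelian local constant $\varepsilon(\chi, s, \psi)$ of $F^\times$, and to do this via the theory of $L$- and $\varepsilon$-factors for the $\times$ (Rankin--Selberg or, more elementarily here, the principal series) construction. Concretely, $\chi\circ\textnormal{det}$ is the Langlands quotient (and in fact a subquotient) of the normalized parabolic induction $\chi\times\chi\times\cdots\times\chi$ ($n$ copies) from the Borel. Since the Langlands--Shahidi / Jacquet--Shalika $\varepsilon$-factor is multiplicative in parabolic induction and inductive in the sense of respecting the Langlands correspondence for $GL_1$, the $\varepsilon$-factor of an induced representation is the product of the $\varepsilon$-factors of the inducing data; applied to $n$ copies of $\chi$ this gives $\varepsilon(\chi\circ\textnormal{det}, s,\psi)=\prod_{i=1}^n \varepsilon(\chi, s,\psi)=\varepsilon(\chi,s,\psi)^n$.

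First I would recall the precise normalization of $\varepsilon$-factors being used (that of \cite{BH2006}), so that the additive character $\psi$ and the shift by $s$ match; with $\psi$ of level one this is exactly the setting of the $\varepsilon(\cdot,s,\psi)$ appearing throughout the paper. Next I would invoke the fact (9.5 and 23.4--23.5 of \cite{BH2006}) that for a representation of Whittaker type obtained by parabolic induction $\rho=\rho_1\times\cdots\times\rho_r$, one has $\gamma(\rho,s,\psi)=\prod_i\gamma(\rho_i,s,\psi)$ and correspondingly for $\varepsilon$, together with the observation that $\varepsilon$ depends only on the semisimplification (equivalently, on the Langlands parameter), so that one may freely replace $\chi\times\cdots\times\chi$ by its one-dimensional subquotient $\chi\circ\textnormal{det}$ without changing the $\varepsilon$-factor. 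Then the identity is immediate. An alternative, still elementary route for small $n$ — essentially the ``exercise'' alluded to in the excerpt — is to compute both sides directly from Tate-type integrals: the zeta integral for $\chi\circ\textnormal{det}$ against a Whittaker function factors, after a change of variables on the mirabolic, into a product of $n$ one-dimensional Tate integrals for $\chi$, and comparing functional equations yields the $n$-th power.

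The main obstacle I expect is purely bookkeeping rather than conceptual: one must make sure that the chosen definition of $\varepsilon(\chi\circ\textnormal{det},s,\psi)$ for a (non-generic, one-dimensional) representation of $GL(n,F)$ agrees with the value obtained from the generic induced representation $\chi\times\cdots\times\chi$ — i.e. that the $\varepsilon$-factor is genuinely an invariant of the $L$-parameter and not just of generic representations — and that the normalization conventions (half-integral shift, dependence of $\varepsilon$ on $\psi$ under $x\mapsto \psi(ax)$, and the absence of an $L$-factor contribution since $\chi$ is ramified of level $l\geq 1$) are tracked consistently on both sides. Once the conventions are pinned down, no genuine difficulty remains. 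I would therefore structure the write-up as: (1) fix normalization; (2) realize $\chi\circ\textnormal{det}$ inside $\chi\times\cdots\times\chi$; (3) apply multiplicativity of $\gamma$/$\varepsilon$ under induction; (4) conclude. For completeness I would also remark that when $\chi$ has level $l\geq 1$ the $L$-factors $L(\chi,s)$ and $L(\chi\circ\textnormal{det},s)$ are all trivial, so that $\gamma=\varepsilon$ throughout and the $\gamma$-identity transfers verbatim to $\varepsilon$.
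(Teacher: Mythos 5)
Your overall strategy --- realize $\chi\circ\textnormal{det}$ as a Langlands subquotient of a principal series and then invoke multiplicativity of $\varepsilon$ (equivalently $\gamma$) under parabolic induction --- is precisely the paper's approach, which cites 3.1.4 of Kudla applied to the Langlands data $Q(\chi\cdot|\cdot|_F^{n-1},\ldots,\chi\cdot|\cdot|_F^{1-n})$. However, there is a genuine error in your step (2): the normalized induction $\chi\times\chi\times\cdots\times\chi$ ($n$ copies of the \emph{same} character) is \emph{irreducible} and generic (the reducibility criterion $\chi_i\chi_j^{-1}=|\cdot|_F$ is never met), so it does not have $\chi\circ\textnormal{det}$ as a subquotient. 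Its $L$-parameter is $\chi^{\oplus n}$, whereas the $L$-parameter of $\chi\circ\textnormal{det}$ is $\chi\otimes\mathrm{Sp}(n)$; these are different, so you also cannot appeal to ``$\varepsilon$ depends only on the semisimplification of the parameter'' to move between the two. The one-dimensional representation $\chi\circ\textnormal{det}$ is instead the Langlands quotient of
\begin{equation*}
\chi|\cdot|_F^{(n-1)/2}\times\chi|\cdot|_F^{(n-3)/2}\times\cdots\times\chi|\cdot|_F^{(1-n)/2},
\end{equation*}
the $|\cdot|$-shifts being forced by the modulus character in normalized induction. Applying multiplicativity to the \emph{correct} induction gives $\prod_j \varepsilon(\chi|\cdot|_F^{j},s,\psi)$ with $j$ ranging over $\tfrac{n-1}{2},\tfrac{n-3}{2},\ldots,\tfrac{1-n}{2}$; since $\varepsilon(\chi|\cdot|_F^{j},s,\psi)=\varepsilon(\chi,s+j,\psi)=q^{-j\,l(\chi)}\varepsilon(\chi,s,\psi)$ for ramified $\chi$ and the exponents $j$ sum to zero, the product still collapses to $\varepsilon(\chi,s,\psi)^n$. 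So your final answer is right, but the intermediate identification is wrong and the cancellation of shifts is a step you omit.

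A secondary remark: the ``alternative elementary route'' you sketch via zeta integrals against a Whittaker function of $\chi\circ\textnormal{det}$ cannot be carried out as stated, since $\chi\circ\textnormal{det}$ is not generic for $n\geq 2$ and admits no Whittaker model. The ``exercise'' alluded to in the paper (and done in its Appendix, Lemma \ref{local constants of character}) is a direct Gauss-sum computation of $\varepsilon(\chi\circ\textnormal{det},s,\psi)$ over a principal hereditary order, not a zeta-integral factorization.
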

\begin{proof}
By writing $\chi \circ \text{det}$ as $Q(\chi\cdot|\cdot|_F ^{n-1}, \ldots, \chi\cdot|\cdot|_F ^{1-n})$ in the Langlands classification,  the Lemma is a special case of 3.1.4 in \cite{Kudla1991}.
\end{proof}

 Choose any principal hereditary order $\mathfrak{A}$ in $A=M_n (F)$ of ramification index $e_\mathfrak{A}$, with Jacobson radical $\mathfrak{P}$. Then the restriction of $\chi\circ\text{det}$ to $\mathcal{K}_{\mathfrak{A}}$ is of level $e_{\mathfrak{A}}l$, where $\mathcal{K}_{\mathfrak{A}}$ is the normalizer of $\mathfrak{A}$ in $G$. Choose $c\in \mathfrak{p}^{-l}$, such that $\chi(1+x)= \psi (cx)$ for $x\in \mathfrak{p}^{[l/2]+1}$. Then one may check directly that
 \begin{center}
 $\chi\circ \text{det}\mid U^{[e_{\mathfrak{A}}l/2]+1}_{\mathfrak{A}}= \psi_c,$
 \end{center}
where $\psi_c$ is the additive character on $U^{[e_{\mathfrak{A}}l/2]+1}_{\mathfrak{A}}$: $\psi_c (1+y)= \psi\circ \text{tr}_A (cy)$, for $y\in \mathfrak{P}^{[e_{\mathfrak{A}}l/2]+1}$.

\begin{lemma}\label{local constants of character}
One has
\begin{equation}\label{local constants identity for character}
\varepsilon(\chi\circ \textnormal{det}, s, \psi)= q^{nl(\frac{1}{2}-s)}(\mathfrak{A}: \mathfrak{P}^{e_{\mathfrak{A}}l+1})^{-\frac{1}{2}}\cdot \tau_{\mathfrak{A}}(\chi, \psi),
\end{equation}
where $\tau_{\mathfrak{A}}(\chi, \psi)$ is the Gauss sum defined as follows,
\begin{equation}\label{definition of tau}
\tau_{\mathfrak{A}}(\chi, \psi)= \sum_{y\in U_\mathfrak{A}/ U^{e_\mathfrak{A}l+1}_{\mathfrak{A}}} \chi^{-1}(\textnormal{det} (cy))\psi_A (cy)
\end{equation}
which simplifies to
\begin{equation}\label{simplified version of tau}
\tau_{\mathfrak{A}}(\chi, \psi)= (\mathfrak{A}: \mathfrak{P}^{[(e_{\mathfrak{A}}l+1)/2]})\sum_{y}  \chi^{-1}(\textnormal{det} (cy))\psi_A (cy),
\end{equation}
where $y$ goes through $U^{[(e_{\mathfrak{A}}l+1)/2]}_{\mathfrak{A}}/U^{[e_{\mathfrak{A}}l/2]+1}_{\mathfrak{A}}.$

\end{lemma}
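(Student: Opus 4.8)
The plan is to establish the two displayed identities in turn: first the expression \eqref{local constants identity for character} for $\varepsilon(\chi\circ\textnormal{det},s,\psi)$ as the full Gauss sum \eqref{definition of tau}, and then its collapse \eqref{simplified version of tau} to the short sum.

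For \eqref{local constants identity for character}, the dependence on $s$ is cheap: by Lemma \ref{1=n lemma} together with the $GL(1)$ analogue of \eqref{s to half lemma} (Tate's functional equation, with $\psi$ of level one and $\chi$ of level $l$, gives $\varepsilon(\chi,s,\psi)=q^{l(\frac12-s)}\varepsilon(\chi,\frac12,\psi)$) one obtains $\varepsilon(\chi\circ\textnormal{det},s,\psi)=q^{nl(\frac12-s)}\,\varepsilon(\chi\circ\textnormal{det},\frac12,\psi)$, so it remains to prove $\varepsilon(\chi\circ\textnormal{det},\frac12,\psi)=(\mathfrak{A}:\mathfrak{P}^{e_{\mathfrak{A}}l+1})^{-1/2}\,\tau_{\mathfrak{A}}(\chi,\psi)$. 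I would do this by feeding an explicit test function into the Godement--Jacquet functional equation defining the standard $\varepsilon$-factor of $\chi\circ\textnormal{det}$, in the style of 25.7 of \cite{BH2006}: take a Schwartz--Bruhat function on $A=M_n(F)$ supported on a single coset of $\mathfrak{P}^{e_{\mathfrak{A}}l+1}$ inside $c^{-1}U_{\mathfrak{A}}$, chosen --- using the identity $(\chi\circ\textnormal{det})\vert U^{[e_{\mathfrak{A}}l/2]+1}_{\mathfrak{A}}=\psi_c$ recorded above --- so that its zeta integral is a monomial in $q^{-s}$; the zeta integral of its $\psi_A$-Fourier transform, again supported near $\mathfrak{A}$, then unwinds to the finite sum over $U_{\mathfrak{A}}/U^{e_{\mathfrak{A}}l+1}_{\mathfrak{A}}$ of \eqref{definition of tau}, after using $\textnormal{det}(cy)=c^{n}\,\textnormal{det}\,y$ and $\textnormal{tr}_A(cy)=c\cdot\textnormal{tr}\,y$. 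The factor $(\mathfrak{A}:\mathfrak{P}^{e_{\mathfrak{A}}l+1})^{-1/2}$ then emerges from the $\psi_A$-self-dual Haar measure on $A$ (for which the volumes of the $\mathfrak{P}^{j}$ are half-integral powers of $q$), together with $(\mathfrak{A}:\mathfrak{P}^{e_{\mathfrak{A}}l+1})=q^{\,n^2 l+n^2/e_{\mathfrak{A}}}$. Throughout I would use Lemma \ref{1=n lemma} combined with the classical Gauss-sum formula for $\varepsilon(\chi,\frac12,\psi)^n$ as an independent check on the normalizing constant.

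For the collapse \eqref{simplified version of tau}, I would run the standard quadratic Gauss-sum reduction, as in \cite{BH2006}. Filtering $U_{\mathfrak{A}}/U^{e_{\mathfrak{A}}l+1}_{\mathfrak{A}}$ by the $U^{t}_{\mathfrak{A}}$ with $t=[(e_{\mathfrak{A}}l+1)/2]$, and writing a generic element of $U^{t}_{\mathfrak{A}}$ as $1+w$ with $w\in\mathfrak{P}^{t}$, the summand $\chi^{-1}(\textnormal{det}(cy))\psi_A(cy)$ picks up the factor $\chi^{-1}(\textnormal{det}(1+w))\,\psi(c\,\textnormal{tr}(yw))$ under $y\mapsto y(1+w)$; expanding $\textnormal{det}(1+w)=1+\textnormal{tr}(w)+\cdots$ and discarding the higher-order terms (which lie deep enough to be annihilated by $\chi\circ\textnormal{det}$, using its known restriction to $U^{[e_{\mathfrak{A}}l/2]+1}_{\mathfrak{A}}$), this factor becomes an additive character of $w$ which is nontrivial unless $y$ already lies in $U^{t}_{\mathfrak{A}}$; hence the outer sum is supported on $U^{t}_{\mathfrak{A}}$, where the summand is constant along cosets of $U^{[e_{\mathfrak{A}}l/2]+1}_{\mathfrak{A}}$, producing the index factor $(\mathfrak{A}:\mathfrak{P}^{[(e_{\mathfrak{A}}l+1)/2]})$ and leaving the short sum over $U^{[(e_{\mathfrak{A}}l+1)/2]}_{\mathfrak{A}}/U^{[e_{\mathfrak{A}}l/2]+1}_{\mathfrak{A}}$. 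I would split according to the parity of $e_{\mathfrak{A}}l$: when $e_{\mathfrak{A}}l$ is odd, $[(e_{\mathfrak{A}}l+1)/2]=[e_{\mathfrak{A}}l/2]+1$ and the short sum degenerates to the single term $y=1$; when $e_{\mathfrak{A}}l$ is even it is a genuine sum over the one layer $U^{e_{\mathfrak{A}}l/2}_{\mathfrak{A}}/U^{e_{\mathfrak{A}}l/2+1}_{\mathfrak{A}}\cong\mathfrak{A}/\mathfrak{P}$.

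The main obstacle I anticipate is not the architecture but the bookkeeping of the normalizing constants --- reconciling the power of $q$ and the factor $(\mathfrak{A}:\mathfrak{P}^{e_{\mathfrak{A}}l+1})^{-1/2}$ coming out of the Godement--Jacquet computation with the $GL(1)$ cross-check --- and the uniform handling of the two parities: the even case of $e_{\mathfrak{A}}l$ (equivalently the interplay with the parity of $l$, since $\textnormal{tr}_A(\mathfrak{P}^{e_{\mathfrak{A}}l/2})=\mathfrak{p}_F^{\lceil l/2\rceil}$) is where a genuine quadratic form on the middle layer appears and one must verify it is harmless for the \emph{collapse}, i.e. that the summand is constant on the correct cosets, without evaluating the residual Gauss sum. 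It is also worth keeping in mind that in \eqref{definition of tau} the argument $cy$ denotes the matrix $c\cdot y$, the scalar $c\in\mathfrak{p}_F^{-l}$ times the $\mathfrak{A}$-unit $y$, so that $\psi_A(cy)=\psi(c\,\textnormal{tr}\,y)$ and $\textnormal{det}(cy)=c^{n}\,\textnormal{det}\,y$ --- which is exactly what reduces the matrix Gauss sum to the abelian Gauss sum of $\chi$ in the cross-check.
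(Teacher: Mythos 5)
Your proposal is correct in outline but takes a genuinely different route from the paper, and the difference is worth making explicit. The paper does \emph{not} compute $\varepsilon(\chi\circ\mathrm{det},\tfrac12,\psi)$ directly via a Godement--Jacquet zeta integral. Instead it treats Lemma~\ref{1=n lemma} as the primary structural input and reduces everything to it: the whole proof consists of verifying the purely algebraic Gauss-sum identity
$(\mathfrak{A}:\mathfrak{P}^{el+1})^{-1/2}\tau_{\mathfrak{A}}(\chi,\psi)=q^{-n(l+1)/2}\tau(\chi,\psi)^n$
by a block-matrix expansion of $\mathrm{det}(1+\varpi^m a)$, isolating the diagonal entries (which yield $\tau(\chi,\psi)^n$), the off-diagonal $a_{ij}a_{ji}$ terms (which yield the normalizing power of $q$ through $\sum_{a,b}\psi(uab)=q$), and discarding higher-order terms by the level condition on $\chi$; once this identity is in hand, the right-hand side of \eqref{local constants identity for character} is recognized as $\varepsilon(\chi,s,\psi)^n$ via the classical Tate formula (23.6.2 of \cite{BH2006}) and Lemma~\ref{1=n lemma} finishes. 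Your route is in the opposite direction: you propose to plug an explicit Schwartz--Bruhat function (supported near $c^{-1}U_\mathfrak{A}$) into the Godement--Jacquet functional equation for the one-dimensional representation $\chi\circ\mathrm{det}$, unwind the Fourier side to produce the matrix Gauss sum $\tau_\mathfrak{A}(\chi,\psi)$, read off the index factor from the $\psi_A$-self-dual Haar measure, and use Lemma~\ref{1=n lemma} only as a cross-check on the normalization. This is a legitimate and arguably more self-contained argument --- it would re-derive the identity \eqref{sum=power of sum} as a by-product rather than needing to prove it --- but it shifts the burden from the paper's elementary (if fiddly) finite-field Gauss-sum computation to the harmonic-analytic bookkeeping of measures and Fourier transforms on $M_n(F)$; and since $\chi$ has level $l\geq 1$, one should note explicitly that $L(\chi\cdot|\cdot|^a,s)\equiv1$, so the functional equation really does collapse to a pure $\varepsilon$-factor identity, otherwise the ``monomial zeta integral'' argument would not directly isolate $\varepsilon$.

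On the collapse \eqref{definition of tau}$\Rightarrow$\eqref{simplified version of tau}: the paper dismisses this as ``purely formal (and standard),'' and your sketch of the partial-summation argument is essentially right, but the filtration should be kept straight. You should average over the subgroup $U^{c''}_\mathfrak{A}/U^{el+1}_\mathfrak{A}$ with $c''=[e_\mathfrak{A}l/2]+1$ (on which $\chi\circ\mathrm{det}$ is genuinely $\psi_c$, so the factor picked up is $\psi\bigl(c\,\mathrm{tr}((y-1)w)\bigr)$ with no quadratic correction), and it is the vanishing of this inner sum for $y\notin U^{c'}_\mathfrak{A}$, $c'=[(e_\mathfrak{A}l+1)/2]$, that concentrates the outer sum. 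Your phrasing of filtering by $U^t$ with $t=c'$ conflates the two indices, which coincide only when $e_\mathfrak{A}l$ is odd; when $e_\mathfrak{A}l$ is even the genuine quadratic form you flag at the end lives precisely on the extra layer $U^{c'}/U^{c''}\cong\mathfrak{A}/\mathfrak{P}$ and is \emph{not} averaged out --- it survives into the short sum, exactly where the paper's $\sum_{a,b}\psi(uab)=q$ then evaluates it. So the collapse is clean as long as one averages over $U^{c''}$ and not $U^{c'}$.
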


\begin{proof}
By the remarks proceeding the Lemma, it is purely formal (and standard) to arrive at \eqref{simplified version of tau} from \eqref{definition of tau} .

We first simplify the RHS of \eqref{local constants identity for character}.

Write $e_{\mathfrak{A}}$ as $e$ for short. In fact, the following identity is well-known, although its proof is scattered in the literature:
\begin{equation}\label{sum=power of sum}
(\mathfrak{A}: \mathfrak{P}^{el+1})^{-\frac{1}{2}}\cdot \tau_{\mathfrak{A}}(\chi, \psi)= q^{-n(l+1)/2} \tau(\chi, \psi)^n
\end{equation}
where $\tau(\chi, \psi)$ is the classical Gauss sum in Tate's thesis (23.6.4 of \cite{BH2006}).

From 1.8 in \cite{Bus1987}, the index of $\mathfrak{P}$ in $\mathfrak{A}$ is $q^{n^2 /e}$. Hence, it suffices to simplify the sum appearing in $\tau_{\mathfrak{A}}(\chi, \psi)$. Denote respectively by $c', c''$ the integers $[(el+1)/2]$ and $[el/2]+1$. When $el+1$ is even (hence $l$ is odd), the sums in both sides of \eqref{sum=power of sum} become one term, and one can check the equation holds immediately, by taking $y=Id$. We assume $el+1$ is odd, i.e., $2 \mid el$.

We check the case $l=2m$ in detail; where the situation when $e$ is even and $l$ is odd follows in the same manner. Clearly, one has $c'=em$, $c''= em+1$. For $y\in U^{c'}_{\mathfrak{A}}/U^{c''}_{\mathfrak{A}}$, we write $y=1+ \varpi^{m}a$, for some $a= (a_{ij})_{1\leq i, j \leq n}\in \mathfrak{A}$. From the description of $\mathfrak{A}$ in (2.5) of \cite{BK1993} as an $e\times e$-block matrix, we see
\begin{equation}\label{expansion of det}
\text{det}(y)=\prod_{1 \leq i \leq n} (1+ \varpi^{m}a_{ii})+\sum^{e}_{s=1}\sum_{i< j} u^{s}_{i, j}a_{ij}a_{ji}\varpi^{2m} + \sum (\text{remaining~terms}),
\end{equation}
where the second inner sum runs through all the integer pairs $(i, j)$ in $((s-1)n/e, sn/e]$, and $u^{s}_{i, j}$ is some unit in $\mathfrak{o}^{\times}_F$. We note that there are in all $e\cdot \frac{n}{e}(\frac{n}{e}-1)/2= \frac{n^2- ne}{2e}$ terms in the second sum of \eqref{expansion of det}. Note also that the terms in the third sum of \eqref{expansion of det} will be killed by $\chi$, as $\chi$ is of level $l=2m$. We are now able to verify \eqref{sum=power of sum} easily:
\begin{equation}
 \underset{y}{\sum}~~\chi^{-1}(\textnormal{det}cy)\psi_{A}(cy)=t
\prod_{1\leq i \leq n}\underset{a_{ii}\in \mathfrak{o}/\mathfrak{p}
}{\sum}~~\chi^{-1}(c(1+\varpi^{m}a_{ii})) \psi(c(1+\varpi^{m}a_{ii})),
\end{equation}
where $t$ is the following quantity:
\begin{center}
$t= \prod_{1\leq s \leq e,~ (s-1)n/e< i< j\leq  sn/e}\sum_{a_{ij}, a_{ji}\in \mathfrak{o}/\mathfrak{p}}ua_{ij}a_{ji}$.
\end{center}
The following easy identity shows that the value of $t$ is $q^{\frac{n^2- ne}{2e}}$, which completes the proof of \eqref{sum=power of sum} in the case that $l$ is even: for any unit $u\in \mathfrak{o}^{\times}_F$, one has

\begin{equation}
\sum_{a, ~b\in \mathfrak{o}/\mathfrak{p}}\psi(uab)=q,
\end{equation}

We have indeed verified that the RHS of \eqref{local constants identity for character} does not depend on the choice of $\mathfrak{A}$. \eqref{local constants identity for character} is reduced to the following:
\begin{equation}\label{final identity of local constants of characters}
\varepsilon(\chi\circ \textnormal{det}, s, \psi)= q^{nl(\frac{1}{2}-s)}q^{-n(l+1)/2} \tau(\chi, \psi)^n
\end{equation}
Note the RHS of \eqref{final identity of local constants of characters} is $(q^{l(\frac{1}{2}-s)}q^{-(l+1)/2}\tau(\chi, \psi))^{n}$, which is just $\varepsilon(\chi, s, \psi)^n$ by 23.6.2 of \cite{BH2006}. We are done, by Lemma \ref{1=n lemma}.
\end{proof}

We now complete the proof of Lemma \ref{stability}; actually based on Lemma \ref{local constants of character} this is just to repeat the argument in 25.7 of \cite{BH2006}. We will use the language of \cite{BH1999} freely.

Let $\Lambda$ be a central type contained in $\pi$, say $\Lambda\in \mathcal{C}\mathcal{C}(\mathfrak{A}, \beta)$, for some principal hereditary order $\mathfrak{A}$ and some element $\beta\in \mathfrak{A}$. Then the level $l(\Lambda)$ of $\Lambda$ is $e_{\mathfrak{A}}\cdot l(\pi)$, where $e=e_{\mathfrak{A}}$ is the ramification index of $\mathfrak{A}$. Then, one has
\begin{equation}\label{local constants of cus}
\varepsilon(\pi, \frac{1}{2}, \psi)= (\mathfrak{A}: \mathfrak{P}^{1+l(\Lambda)})^{-\frac{1}{2}}\tau (\Lambda, \psi),
\end{equation}
where $\tau (\Lambda, \psi)$ is the Gauss sum defined in \cite{BH1999}, and can be simplified as:
\begin{equation}\label{gauss sum for local cuspidal}
\tau(\Lambda, \psi)=c_1 \underset{y\in
U_{\mathfrak{A}}^{[(l(\Lambda)+1)/2]}/U_{\mathfrak{A}}^{[l(\Lambda)/2]+1}}{\sum}~~\text{tr}\Lambda^{\vee}
(\beta y)\psi_{A}(\beta y),
\end{equation}
in which $c_1= \frac{(U_\mathfrak{A} ^{[l(\Lambda)/2]+1}: U_\mathfrak{A} ^{l(\Lambda)+1})}{\text{dim}\Lambda}$.

Now for a character $\chi$ of level $m> 2l(\pi)$,  the cuspidal representation $\chi \pi$ contains the central type $\chi\Lambda$, which is of level $em$. More precisely, $\chi\circ\text{det}\otimes \Lambda\in \mathcal{C}\mathcal{C}(\mathfrak{A}, c+\beta)$\footnote{One indeed needs to check that the group $\mathbf{J}_{\beta+c}$ arising from the simple stratum $(\mathfrak{A}, em, 0, \beta+c)$ coincides with the $\mathbf{J}_\beta$ arising from $(\mathfrak{A}, l(\Lambda), 0, \beta)$, but this is directly from \cite{BK1993}.}. As $m> 2 l(\pi)$, $\Lambda$ is trivial on $U^{[(em+1)/2]}_{\mathfrak{A}}$,  and hence $\chi\circ\text{det}\otimes \Lambda \mid U^{[(em+1)/2]}_{\mathfrak{A}}=\chi\circ\text{det}.$
The identity in Lemma \ref{stability} follows by using \eqref{s to half lemma}, \eqref{local constants of cus}, \eqref{gauss sum for local cuspidal}, combing Lemma \ref{local constants of character} (note that $1+c^{-1}\beta\in \mathfrak{P}^{[em/2]+1}$ under the assumption). We are done.

\end{section}

\section*{Acknowledgements}
The work was supported by EPSRC Grant EP/H00534X/1.

\renewcommand{\refname}{Reference}
\bibliographystyle{amsalpha}
\bibliography{converse}
\nocite{Bus1987}

\texttt{SCHOOL OF MATHEMATICS, UNIVERSITY OF EAST ANGLIA, NORWICH, NR4 7TJ, UK}

\emph{E-mail address}: \texttt{xupeng2012@gmail.com}

\end{document}